\documentclass[10pt]{amsart}


\usepackage[margin=1.5in]{geometry}               


\usepackage{enumerate}
\usepackage{amssymb}
\usepackage{amsmath}
\usepackage{amscd}
\usepackage{amsthm}
\usepackage{amsfonts}
\usepackage{graphicx}
\usepackage{verbatim}
\usepackage{hyperref}
\usepackage{epstopdf}
\usepackage[utf8]{inputenc}
\usepackage{bbm}
\usepackage{tikz} 
\usepackage{mathtools} 

\usepackage{float}

\newtheorem{theorem}{Theorem}

\newtheorem{lemma}[theorem]{Lemma}

\theoremstyle{definition}

\newtheorem{corollary}[theorem]{Corollary}
\theoremstyle{definition}

\theoremstyle{definition}

\theoremstyle{definition}
\theoremstyle{definition}\newtheorem{remark}[theorem]{Remark}
\theoremstyle{definition}
\numberwithin{theorem}{section}



\newcommand{\cA}{\mathcal{A}}
\newcommand{\cB}{\mathcal{B}}

\newcommand{\R}{\mathbb{R}}

\newcommand{\C}{\mathbb{C}}




\newcommand{\onto}{\xymatrix{\ar@{>>}[r]&}}
\newcommand{\da}[4]{\xymatrix{#1 \ar@<.5ex>[r]^{#2} \ar@<-.5ex>[r]_{#3} & #4}}

\newif\ifdraft\drafttrue

\def\BState{\State\hskip-\ALG@thistlm}
\newfloat{algorithm}{t}{lop}


\author{Rene Rühr}
\title[Unique Ergodicity of IETs]{A Convexity Criterion for Unique Ergodicity of Interval Exchange Transformations}
\date{}

\begin{document}
\maketitle

There is a meta-conjecture in metric number theory that states that any Diophantine property that holds for generic vectors in $\R^n$ should hold for generic vectors on nondegenerate subvarities, see Kleinbock's survey \cite{kleinbock69some}[Section 4]. Mahler asked for example
whether all most points on the curve $s\mapsto (s,s^2,\dots,s^d)$ are very well approximable. This has been answered affirmatively by Sprindžuk. It is believed that an analogue of this phenomenon holds also for the unique ergodicity property for interval exchange transformations. Minsky and Weiss \cite{minskyweiss} provide a general condition for unique ergodicity to hold. In this note we provide an easy-to-check criterion for their condition to be satisfied.

Let $\sigma$ denote a permutation of $d$ elements. 
Let $\Omega$ denote the antisymmetric matrix
\[
\Omega_{ij} = 
\left\{
\begin{matrix}
1 && i>j, \, \sigma(i)<\sigma(j) \\ 
-1 && i<j, \, \sigma(i) >\sigma(j) \\ 
0 && \text{otherwise.}
\end{matrix} \right.
\]
Let ${\bf a}=(a_1,\dots,a_d)\in \R^d_+$ be a row vector with positive entries $a_i>0$
and the associated interval $I_{\bf a}=[0,\sum a_i)$, 
which is divided into $d$ subintervals $I_i=[x_{i-1},x_i)$ where $x_i=\sum_{j\leq i} a_j$ are called discontinuities. 
Also introduce $x'_i=\sum_{j\leq i} a_{\sigma^{-1}(j)}$.
An \textit{interval exchange transformation} $T:I_{\bf a}\to I_{\bf a}$ defined by the data $(\sigma,{\bf a})$ is the map 
\[
T(x)=x+({\bf a}\Omega)_j=x-x_j+x_{\sigma(j)}' \quad \text{ for }  x\in I_j.
\]
In words, $T$ permutes the intervals $I_j$ of length $a_j$ according to $\sigma$. 
The form $\Omega_{ij}$ captures the exchange of two intervals $I_i, I_j$ relative to each other.

We shall always assume that the permutation $\sigma$ is irreducible 
in the sense that if $\{1,\dots,k\}\subset\cA=\{1,\dots,d\}$ is invariant under $\sigma$ then $k=d$.

Masur \cite{masur} and Veech \cite{veechgauss} proved independently that for almost all $\bf{a}\in \R^d_+$, 
the interval exchange transformation $T$ associated to $(\sigma,\bf{a})$ is uniquely ergodic, 
that is, the only $T$-invariant probability measure on $I$ is Lebesgue measure. 

Motivated by a conjecture of Mahler in the theory of Diophantine approximation, 
Minsky and Weiss \cite{minskyweiss} proved the following theorem.

\begin{theorem}[Minsky-Weiss]
\label{thm:mw-mahler}
	Let $\sigma=(d,\dots,1)$ and $\textbf{a}(s)=(s,s^2,\dots,s^d)$. 
	Then for Lebesgue almost all $s>0$, the interval exchange transformation 
	associated to $(\sigma,\textbf{a}(s))$ is uniquely ergodic.
\end{theorem}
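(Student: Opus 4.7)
The plan is to exhibit the moment curve $\mathbf{a}(s)=(s,s^2,\dots,s^d)$, together with the reverse permutation $\sigma(i)=d+1-i$, as an instance of the convexity criterion to be developed in the body of the paper, and then simply invoke that criterion.

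First I would unpack the Minsky--Weiss sufficient condition from \cite{minskyweiss} for unique ergodicity along a smooth curve $s\mapsto\mathbf{a}(s)$. At the heart of their theorem is a nondegeneracy requirement: under Rauzy--Veech renormalization, the images of $\mathbf{a}(s)$ together with a bounded number of its derivatives must remain transverse to every rational hyperplane of $\mathbb{R}^d$ arising from the combinatorics of $\sigma$. The convexity criterion of this paper will assert that a quantitative strict convexity of the curve, encoded through the antisymmetric form $\Omega$, is enough to force this transversality.

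Next I would verify the convexity criterion for the reverse permutation. For $\sigma(i)=d+1-i$ the matrix $\Omega$ takes the particularly transparent form $\Omega_{ij}=\on{sign}(i-j)$, so the entries of $\mathbf{a}(s)\Omega$ and their iterated derivatives in $s$ are linear combinations of monomials $s^k$ with distinct positive exponents. The associated Wronskian reduces to a Vandermonde determinant in these exponents and thus is nonzero on every positive subinterval. This produces precisely the strict convexity the criterion demands. Irreducibility of $\sigma$ (automatic for the reverse permutation when $d\geq 2$) is what permits the criterion to be applied globally rather than only on a proper subsystem.

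The final step is to conclude: for Lebesgue-almost every $s>0$ the Minsky--Weiss nondegeneracy condition is met, hence the interval exchange transformation $T$ associated to $(\sigma,\mathbf{a}(s))$ is uniquely ergodic. The main obstacle I anticipate is not the Vandermonde computation itself but the bookkeeping required to check the convexity condition after the various coordinate projections imposed by the faces of the Rauzy--Veech simplex indexed by $\sigma$. It is here that the total positivity of the moment curve, rather than merely the linear independence of its derivatives, will need to be invoked to rule out degenerate behaviour on every relevant face.
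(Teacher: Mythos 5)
Your plan rests on a misidentification of the Minsky--Weiss sufficient condition, and the gap is not one of bookkeeping but of substance. The criterion actually used (Theorem~6.2 of \cite{minskyweiss}, restated here as Theorem~\ref{thm:mw-curves}) is \emph{not} a transversality or nondegeneracy condition on $\mathbf{a}(s)$ and its derivatives relative to rational hyperplanes under Rauzy--Veech renormalization. It is the requirement that $(\mathbf{a}(s),\dot{\mathbf{a}}(s))$ be a \emph{positive pair}: the function $L(x)=(\Omega\,\dot{\mathbf{a}}^T)_i$ for $x\in I_i$ must satisfy $\mu(L)>0$ for every $T$-invariant probability measure $\mu$. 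Your proposed Wronskian/Vandermonde computation establishes linear independence of the derivatives of the moment curve, which is the relevant input for Sprind\v{z}uk-type extremality theorems in Diophantine approximation (the motivating analogy), but it does not engage with the positivity of $L$ at all, and there is no step in the Minsky--Weiss framework that it would certify. Likewise, the ``coordinate projections imposed by the faces of the Rauzy--Veech simplex'' and the appeal to total positivity do not correspond to anything required by the actual argument.

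What the paper does instead is entirely geometric and much shorter: a sufficient condition for positivity is $L(x)>0$ pointwise, and this holds whenever the suspension polygon built from the vectors $\zeta_i=(a_i,\dot a_i)$ is embedded, because $L$ is then the return time of the vertical flow to the interval $I$. Lemma~\ref{lem:decreasing} shows embeddedness follows from monotonicity of the slopes $\kappa_i=\dot a_i/a_i$, for \emph{any} irreducible permutation. For the moment curve the verification is one line: $\kappa_i = i s^{i-1}/s^{i} = i/s$, which is monotone in $i$ for every $s>0$. To repair your proof you would need to replace the transversality discussion with the positive-pair condition and replace the Vandermonde computation with this slope computation (or some other argument that $L>0$ against every invariant measure).
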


In this note, we wish to note how to extend the theorem of Minsky-Weiss 
to arbitrary permutations by means of a simple convexity criterion.

We first recall the theorem from which Theorem~\ref{thm:mw-mahler} is deduced, 
which requires us to introduce more definitions.
A connection of $T$ is a triple $(m,x_i,x_j)$ for which $T^m(x_i)=x_j$. 
As noted by Keane, if the coordinates $a_k$ of $\textbf{a}$
are rationally independent then $T$ has no connections. 
We shall restrict to curves ${\bf a}(s)$ for which this is the case for almost all $s$. 
This implies that there are no $T$-invariant atomic probability measures.

Let ${\bf b}=(b_1,\dots,b_d)\in\R^d$ be a row vector, which we will take to be ${\bf b} = \dot{\mathbf{a}}$, the derivative of a curve ${\bf a}(s)$.
Define $y_i=\sum_{j\leq i} b_i$ and $y_i'=\sum_{j\leq i}b_{\sigma^{-1}(j)}$. We put
\[
	L(x)=(\Omega{\bf b}^T)_i=y_i-y'_{\sigma(i)} \quad \text{ for }  x\in I_i.
\]
We call $({\bf a},{\bf b})\in \R^d_+\times \R^d$ a positive pair if $\mu(L)>0$ for any $T$-invariant probability measure $\mu$.
The following is a simplified statement of Theorem~6.2 in \cite{minskyweiss}.
\begin{theorem}[Minsky-Weiss]
\label{thm:mw-curves}
	If $\mathbf{a}:A\to\R^d_+$ is a $C^2$-curve defined on an interval $A\subset\R$ and $\sigma$ a permutation for which
	$(\mathbf{a}(s),\dot{\mathbf{a}}(s))$ is positive for Lebesgue almost all $s\in A$
	then $T$ associated to $(\sigma,\mathbf{a}(s))$ is uniquely ergodic for almost all $s\in A$.
\end{theorem}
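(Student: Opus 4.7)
The plan is to reduce unique ergodicity of the interval exchange transformation to recurrence of its Rauzy-Veech renormalization orbit, by invoking Masur's criterion: $T_{\sigma,\mathbf{a}}$ is uniquely ergodic whenever its forward renormalization orbit in the appropriate stratum of translation surfaces returns infinitely often to a compact set. Under this reduction the goal becomes to show that for Lebesgue-a.e. $s\in A$, the Teichmüller (equivalently, Rauzy-Veech) orbit starting from $(\sigma,\mathbf{a}(s))$ does not escape to the cusp.

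To achieve almost-sure recurrence, I would establish a quantitative non-divergence estimate for the renormalization flow along the curve $s\mapsto \mathbf{a}(s)$, in the spirit of Kleinbock-Margulis. Concretely, for each renormalization time $t$ and each cusp neighbourhood $K^c_\epsilon$ of scale $\epsilon$, one wants a bound of the form
\[
\bigl|\{ s\in A : \text{the renormalized orbit at time } t \text{ lies in } K^c_\epsilon\}\bigr|\;\leq\; C\,\epsilon^{\alpha}
\]
with $C,\alpha>0$ independent of $t$. A Borel-Cantelli argument along a suitably chosen discrete sequence of times then yields non-escape, hence recurrence, for Lebesgue-a.e. $s\in A$, and Masur's criterion finishes the job.

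The positivity hypothesis is exactly the transversality ingredient that makes such an estimate accessible. Indeed, $L$ is, up to sign, the infinitesimal variation of the translation vector $\mathbf{a}\Omega$ defining $T$ as one moves in the direction $\dot{\mathbf{a}}$, so $\mu(L)$ records the first-order effect of the perturbation on any $T$-invariant probability measure $\mu$. Uniform positivity $\mu(L)>0$ over the weak-$*$ compact set of invariant probability measures expresses a quantitative transversality of the curve to every first-order ``bad'' direction, while the $C^2$ assumption on $\mathbf{a}$ controls the second-order deviation of the curve from its tangent line on arbitrary scales. Together, first-order transversality plus second-order regularity are the two ingredients a Kleinbock-Margulis-type non-divergence argument consumes.

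The main obstacle is producing the quantitative non-divergence estimate itself. The ambient space is a stratum of translation surfaces rather than a homogeneous space, so cusp excursions of the Teichmüller flow must first be translated into combinatorial statements about the lengths of intervals appearing under Rauzy-Veech induction. Furthermore, the pointwise positivity $\mu(L)>0$ must be promoted to a uniform lower bound $\mu(L)\geq \delta>0$ by a compactness argument on the space of invariant measures, after which this uniform bound has to be integrated against Lebesgue measure on $A$ via $C^2$ Taylor estimates of Kleinbock-Margulis type. Assembling these pieces into a genuine non-divergence bound on moduli space is the substantive content of Theorem 6.2 of \cite{minskyweiss}.
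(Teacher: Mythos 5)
The first thing to say is that the paper contains no proof of this statement: it is explicitly presented as ``a simplified statement of Theorem~6.2 in \cite{minskyweiss}'' and is imported as a black box, the paper's actual contribution being the convexity criterion of Lemma~\ref{lem:decreasing} for verifying the positivity hypothesis. So there is no internal argument to compare yours against; the only meaningful question is whether your sketch would stand on its own as a proof of the Minsky--Weiss theorem.

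As a roadmap it is faithful to what Minsky and Weiss actually do --- reduce unique ergodicity to recurrence of the renormalization orbit via Masur's criterion, and obtain recurrence for almost every parameter from a quantitative non-divergence estimate in the style of Kleinbock--Margulis, with positivity of $\mu(L)$ playing the role of the transversality/non-degeneracy hypothesis and the $C^2$ assumption controlling deviation from linearity. But as a proof it is entirely a gap: the one substantive ingredient, the uniform-in-$t$ measure estimate $|\{s : \text{orbit at time } t \in K^c_\epsilon\}| \leq C\epsilon^{\alpha}$, is never established, and your closing sentence defers it to ``the substantive content of Theorem 6.2 of \cite{minskyweiss}'' --- which is precisely the statement you are supposed to be proving. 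This makes the argument circular rather than incomplete in a repairable way. Two further soft spots would need attention even in a genuine write-up: promoting $\mu(L)>0$ to a uniform bound $\mu(L)\geq\delta$ by weak-$*$ compactness is not automatic, since $L$ is only piecewise constant and $\mu\mapsto\mu(L)$ need not be weak-$*$ continuous (one must use that the relevant invariant measures are nonatomic, which is where the connection-free hypothesis enters); and the positivity hypothesis holds only for almost every $s$, so any uniformity in $s$ required by the Borel--Cantelli step has to be extracted on compact exhaustions of a full-measure subset of $A$ rather than on $A$ itself.
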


While the the condition seems hard to check 
- involving all $T$-invariant $\mu$'s  (from which want to deduce that there is only one!) - 
we see however that a sufficient criterion for positivity is $L(x)>0$ pointwise for every $x\in I$.

Let us now explain a suspension construction of Masur for an interval exchange transformation $T$ associated to $(\sigma,{\bf a})$.
Let ${\bf b}\in\R^d$ be a ``height vector'' (associated to the ``length vector'' ${\bf a}$) 
and define $\zeta_i$ to be $(a_i,b_i)\in \R^2$ and their slopes to be $\kappa_i=\frac{b_i}{a_i}$.

Let $\Gamma_t$ be the curve obtained by connecting the points
\[
C_0=C_0^t=(0,0),\; C_1^t=\zeta_1,\; C_2^t=\zeta_1+\zeta_2,\; \dots ,\; C_d=C^t_d=\sum_{i=1}^d \zeta_i
\]
and $\Gamma_b$ is the curve obtained by connecting
\[
C_0=C_0^b,\; C_1^b=\zeta_{\sigma^{-1}(1)},\; C_2^b=\zeta_{\sigma^{-1}(1)}+\zeta_{\sigma^{-1}(2)},\;\dots,\; C_d=C_d^b.
\]

If $\zeta_1$ lies above $\zeta_{\sigma^{-1}(1)}$, i.e.\ if $\kappa_1>\kappa_{\sigma^{-1}(d)}$ then we call $\Gamma_t$ the top curve and $\Gamma_b$ the bottom curve. They have common end points $C_0$ and $C_d$. 
We denote their union by $\Gamma$. 
If there are no further intersections, $\Gamma$ bounds a polygon. 
In this case, after identifying the line segment $[C^t_{k-1},C^t_k]$ of $\Gamma_t$ with segment $[C^b_{j-1},C^b_{j}]$ in $\Gamma_b$ where $k=\sigma^{-1}(j)$.
One obtains a closed topological surface $S$ which outside of the corners of the polygon inherits a flat structure from $\R^2$. 
This means that there is an atlas of charts $\{(U,\psi)\}$ of $U$ open, and $\psi:U\to\R^2$ continuous such that for any two charts $\psi_i:U_i\to\C$ over a common point $p$, 
we find a translation $v\in \R^2$ such that $\psi_1=\psi_2+v$ for all points around $p$. It is possible to complement to an atlas defined on all of $M$ by considering the complex multiplication on $\R^2=\C$ with maps of the form $\psi=\phi^{\alpha+1}$ for a homeomorphism $\phi:U\to\C$ that maps a corner to $0$. The corners with $\alpha>0$ are called the singularities of $M$.

$M$ is endowed with a dynamical system, the vertical straight line flow that preserves the natural area form coming from the flat metric.
We note that the interval $I$ embeds into $M$ as a horizontal line starting from the origin.
The vertical straight line flow defines a suspension of the interval exchange transformation $T$ by considering the induced transformation on $I$, namely the first return map of $I\to I$.
We can now understand the meaning of $L(x)$: it is the return time of $x$ to $I$, and as such positive.

Self-intersections of the curves $\Gamma_t$ and $\Gamma_b$ give rise to a ``nonsensical picture'' (see depictions on page 247, \cite{minskyweiss}). We observe here that one can make sense of the picture even if the curve $\Gamma$ has self-intersection, by attaching a half-translation structure to it, but we have not tried to follow up on this direction.
Instead, we shall restrict ourselves to a criterion that avoids self-intersections.

\begin{lemma}
\label{lem:decreasing}
	Let ${\bf a}\in\R^d$ be a length vector, ${\bf b}\in\R^d$ be a height vector and $\Gamma_t:I\to\R^2$ be the top curve constructed by concatenating the vectors $\zeta_i=(a_i,b_i)\in \R^2$, i.e.\ $\Gamma_t(\sum_{i\leq j} a_i)=C^t_j$. Suppose the slopes $\kappa_i=\frac{b_i}{a_i}$ of $\zeta_i$ are strictly monotonically decreasing so that $\Gamma_t$ is convex. 
	Then for any irreducible permutation $\sigma$ and bottom curve $\Gamma_b$ constructed from vectors
	$\zeta_{\sigma^{-1}(1)}, \dots, \zeta_{\sigma^{-1}(d)}$, the closed curve $\Gamma=\Gamma_t\cup \Gamma_b$ has no self-intersections.
	In particular, $({\bf a}, {\bf b})$ defines a positive pair if connection-free.
\end{lemma}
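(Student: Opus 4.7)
Since each $\zeta_i=(a_i,b_i)$ has $a_i>0$, the $x$-coordinate is strictly increasing along both $\Gamma_t$ and $\Gamma_b$, so these are graphs of piecewise linear functions $f_t,f_b\colon[0,A]\to\R$ on the common domain $[0,A]$ with $A=\sum_i a_i$. Ruling out self-intersections of $\Gamma$ beyond the shared endpoints $C_0,C_d$ thus reduces to establishing the strict inequality $f_b(x)<f_t(x)$ for every $x\in(0,A)$.

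The first step is to identify $f_t$ with the value function of the continuous-knapsack linear program
\[
	\mathrm{LP}(x)\colon\qquad \max\sum_i t_i b_i\quad\text{subject to}\quad \sum_i t_i a_i = x,\ t_i\in[0,1].
\]
Because the slopes $\kappa_i$ are strictly decreasing in $i$, the greedy rule that fills the $t_i$ in the order $i=1,2,\dots,d$ solves $\mathrm{LP}(x)$, and its graph as $x$ varies is precisely $\Gamma_t$. For $x$ in the $k$-th segment of $\Gamma_b$, writing $\pi=\sigma^{-1}$, the selection with $t_i=1$ on $\{\pi(1),\dots,\pi(k-1)\}$, a fractional $t_{\pi(k)}\in(0,1)$, and $t_i=0$ elsewhere is feasible for $\mathrm{LP}(x)$ and realises the value $f_b(x)$. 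This yields the weak inequality $f_b\le f_t$ on $[0,A]$ at once (equivalently: all the vertices $C_k^b$ lie in the convex hypograph of the concave $f_t$, so the chords between them do too).

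The strict inequality is the crux and the place where irreducibility enters. Since the $\kappa_i$ are pairwise distinct, $\mathrm{LP}(x)$ admits a unique optimizer, so any feasible non-greedy selection has strictly smaller objective. An equality $f_b(x_0)=f_t(x_0)$ at an interior $x_0$ would therefore force the $\Gamma_b$-selection above to agree with the greedy one; matching both full supports and fractional indices forces $\{\pi(1),\dots,\pi(k-1)\}=\{1,\dots,k-1\}$ together with $\pi(k)=k$. For $1\le k<d$ this says $\{1,\dots,k\}$ is $\sigma$-invariant, ruled out by irreducibility, while for $k=d$ the condition $\pi(d)=d$ makes $\{1,\dots,d-1\}$ $\sigma$-invariant, again ruled out. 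An analogous (easier) check handles the exceptional $x_0$ sitting at a vertex $X_k^b$. Hence $\Gamma_t\cap\Gamma_b=\{C_0,C_d\}$ and $\Gamma$ is simple.

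Granted this, the construction recalled earlier turns the polygon bounded by $\Gamma$ into a genuine translation surface under the connection-free hypothesis, and $L(x)$ is the strictly positive vertical first-return time to $I$; thus $L>0$ pointwise and $\mu(L)>0$ for every $T$-invariant probability measure $\mu$, which is the defining positivity of $({\bf a},{\bf b})$. The main obstacle is the strict-inequality step: uniqueness of the LP optimizer has to be converted into the combinatorial obstruction supplied by irreducibility of $\sigma$.
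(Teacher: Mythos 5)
Your proof is correct, but it takes a genuinely different route from the paper. The paper argues by contradiction with a double induction (on the number of symbols $d$ and on the segments of $\Gamma_b$): assuming the final segment $[C^b_{j-1},C^b_j]$ crosses some segment of $\Gamma_t$, it deletes a carefully chosen set $K$ of symbols, passes to the irreducible component of the reduced permutation, and shows the intersection survives, contradicting the induction hypothesis. You instead observe that, since all $a_i>0$, both $\Gamma_t$ and $\Gamma_b$ are graphs of piecewise linear functions over $[0,A]$, and you prove the pointwise inequality $f_b<f_t$ on $(0,A)$ by identifying $f_t$ with the value function of the fractional-knapsack LP (equivalently, a rearrangement inequality: sorting the $\zeta_i$ by decreasing slope yields the concave majorant of any other concatenation order), with strictness extracted from uniqueness of the greedy optimizer together with irreducibility of $\sigma$. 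Your argument checks out: the exchange step $\delta(\kappa_i-\kappa_j)>0$ is valid for arbitrary signs of the $b_i$ because only $a_i>0$ is used, and the equality analysis correctly converts "the $\Gamma_b$-selection is greedy'' into $\sigma$-invariance of a proper initial segment $\{1,\dots,k\}$ (including the boundary case $k=d$ via $\sigma(d)=d$). What your approach buys is a non-inductive, easily verified proof that isolates exactly where irreducibility is needed (only for strictness) and yields the stronger statement that $\Gamma_b$ lies weakly below $\Gamma_t$ everywhere, justifying the top/bottom terminology; the paper's induction is more combinatorial-geometric in flavor and closer in spirit to the Rauzy-type surgery on permutations, but it is harder to audit. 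The only cosmetic slips are the stray $X_k^b$ for $C_k^b$ and the fact that the vertex case, while indeed easy, deserves the one-line argument you allude to (an all-integral selection can only coincide with the greedy optimizer if its support is an initial segment $\{1,\dots,k\}$, again contradicting irreducibility).
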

\begin{proof}
We shall argue by induction on the number of symbols $d$.
The base case is on two elements $d=2$. 
By monotonicity $\kappa_1>\kappa_2$ and by irreducibility $\sigma=(2,1)$.
Then $\Gamma_t\cup\Gamma_b$ bounds a parallelogram.

Assume now that for all $d'<d$ the lemma is true. 
Let $\Gamma_{b,j}$ the curve from concatenating $C_0,C^b_1, \dots, C^b_j$ from left to right, i.e.\ restricting $\Gamma_b:I\to \R^2$ to $\cup_{i=1}^j I_{\sigma^{-1}(i)}$.
We now start another induction and assume that for all $j'<j$, $\Gamma_{b,j'}$ does not intersect $\Gamma_t$.
For the base of the induction $j=1$, there is nothing to check.

If $\Gamma_{b,j}$ intersects $\Gamma_t$ then by induction hypothesis 
it does so with its final line segment $[C^b_{j-1},C^b_{j}]$. We put $k=\sigma^{-1}(j)$ such that $C^b_{j-1}+\zeta_{k}=C^b_{j}$, intersecting, say, the $i$th segment $[C^t_{i-1},C^t_{i}]$ of $\Gamma_t$. Then $\kappa_{k}>\kappa_i$. 
By monotonicity, $\zeta_k$ has to appear to the left of $\zeta_i$ in $\Gamma_t$, i.e.\ $k<i$.

We now describe a procedure of removing $[C^b_{j-1},C^b_j]$ to obtain a smaller permutation to apply the induction hypothesis on $d'$.

Now observe that if $K\subset\sigma^{-1}(\{1,\dots,j\})$, 
we can define the curves $\Gamma_{t,K},\Gamma_{b,j,K}$ that one obtains from taking $\Gamma_{t}$ resp.\ $\Gamma_{b,j}$ 
and removing the line segments $[C^t_{k-1},C^t_{k}]$ for $k\in K$ from $\Gamma_{t}$ resp.\ $[C^b_{\sigma^{-1}(j'-1)},C^b_{\sigma^{-1}(j')}]$ from $\Gamma_{b,j}$ for $\sigma^{-1}(j')\in K$. 
Below, we shall have the additional property that $K\subset\{1,\dots,i-1\}$ for some $i$.
We obtain a new permutation $\sigma_K$ obtained by removing the symbols $k\in K$. 
If it is no longer irreducible then the maximal invariant subset $\{1,\dots,\ell\}$ 
must be contained in $\sigma^{-1}(\{1,\dots,j-1\})$
(or else $\sigma$ is already reducible).
By removing the sub-permutation on $(1\dots,\ell)$ from $\sigma_K$, 
we can allow ourselves to only consider the irreducible component $\sigma'$ of $\sigma_K$ containing $i$. 

We now choose $K=\{k'=\sigma^{-1}(j'): j'\leq j \text{ and } \kappa_{k'}\geq \kappa_{k} \}$.
We note that $k'\in K$ implies $k'<k=\sigma^{-1}(j)$ and that $i\not\in K$. 
Hence the curve $\Gamma_{t,K}$ is only changed to the left of its line segment $[C^t_{i-1},C^t_{i}]$,
and most importantly, the curve $\Gamma_{b,j,K}$ still intersects $[C^t_{i-1},C^t_{i}]$. To see this, divide the plane in two half-planes with boundary $\partial$ containing $\zeta_k$ attached to the right endpoint of $\Gamma_{b,j,K}$, and we see that $\Gamma_{b,j,K}$ stays to the upper-left half-plane. Since $[C^t_{i-1},C^t_{i}]$ intersects $\partial$, it also intersects $\Gamma_{b,j,K}$ as claimed.

If $\sigma_K$ is no longer irreducible then we proceed with the irreducible restriction $\sigma'$ as described above, supported on, say, $\cB\subset\cA$. Consider the associated to the pair $({\bf a'},{\bf b'})$ where
 ${\bf a'},{\bf b'}\in \R^{|\cB|}$ by restricting to the support of $\sigma'$. These give still monotone slopes, and the induction hypothesis on $d'<d$ applies, i.e.\ there are no self intersections. By construction, however, the curve defined by ${\bf a'},{\bf b'}$ and $\sigma'$ has at least one self-intersection.
\end{proof}

\begin{remark}
	We have an analogous criterion if $\kappa_{i}$ are increasing in $i$, in which case $\Gamma_b(\sum_{i\leq j} a_j)=C_j^t$, and we apply the argument of Lemma~\ref{lem:decreasing} with roles of $\Gamma_t$ and $\Gamma_b$ exchanged.
\end{remark}
\begin{remark}
	Barak Weiss has informed us on a topological proof of Lemma~\ref{lem:decreasing} which we invite the reader to find herself.
\end{remark}
\begin{corollary}
	Theorem~\ref{thm:mw-mahler} holds for any irreducible permutation.
\end{corollary}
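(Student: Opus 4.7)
The plan is to apply Theorem~\ref{thm:mw-curves} to the Mahler curve $\mathbf{a}(s) = (s, s^2, \dots, s^d)$ on $A = (0,\infty)$ with the given irreducible permutation $\sigma$, reducing the corollary to two checks for almost every $s$: the pair $(\mathbf{a}(s), \dot{\mathbf{a}}(s))$ must be positive, and the associated interval exchange transformation must be connection-free.

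For the positivity check, I would compute $\dot{\mathbf{a}}(s) = (1, 2s, 3s^2, \dots, d\,s^{d-1})$ and observe that the slopes are
\[
\kappa_i = \frac{b_i}{a_i} = \frac{i\, s^{i-1}}{s^i} = \frac{i}{s},
\]
which are \emph{strictly increasing} in $i$ for every fixed $s > 0$. Thus the remark following Lemma~\ref{lem:decreasing}, in which the roles of $\Gamma_t$ and $\Gamma_b$ are exchanged, applies to any irreducible $\sigma$ and produces a closed curve $\Gamma = \Gamma_t \cup \Gamma_b$ with no self-intersections. Via the suspension construction recalled before the lemma, $\Gamma$ bounds a polygon carrying a flat structure, and $L(x)$ is the first-return time of the vertical straight-line flow to $I$; in particular $L(x) > 0$ pointwise. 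Integrating against any $T$-invariant probability measure $\mu$ then gives $\mu(L) > 0$, so the pair $(\mathbf{a}(s), \dot{\mathbf{a}}(s))$ is positive.

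For connection-freeness, I would invoke Keane's observation recalled in the excerpt: it suffices that the coordinates $s, s^2, \dots, s^d$ be $\Q$-linearly independent. A nontrivial rational relation $\sum_{i=1}^d q_i s^i = 0$ forces $s$ to be a root of some fixed nonzero polynomial in $\Q[s]$; the countable union of such finite root sets has Lebesgue measure zero, so connection-freeness holds for almost every $s > 0$. Theorem~\ref{thm:mw-curves} then delivers unique ergodicity for almost all $s$. I do not anticipate a genuine obstacle: the only substantive step is the slope-monotonicity check, which is immediate for this curve; everything else is bookkeeping on the hypotheses of the theorems just proved.
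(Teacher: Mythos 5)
Your proposal is correct and follows the paper's argument exactly: the paper's proof is the one-line computation $\kappa_i = is^{i-1}/s^i = i/s$, monotone in $i$, with the application of the remark after Lemma~\ref{lem:decreasing} (increasing slopes), the resulting positivity of the pair, and the measure-zero argument for connection-freeness all left implicit. You have simply written out the bookkeeping the paper omits.
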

\begin{proof}
	The slopes associated to $\textbf{a}(s)=(s,s^2,\dots,s^d)$ are $\kappa_i=\frac{is^{i-1}}{s^i}=\frac{i}{s}$, monotone in $i$.
\end{proof}

\bibliographystyle{siam}

\end{document}